
\documentclass[letterpaper, 10 pt, conference]{ieeeconf}  

\IEEEoverridecommandlockouts                              
\overrideIEEEmargins


\usepackage{graphics}
\usepackage{graphicx} 
\usepackage{rotating}
\usepackage{dcolumn}
\usepackage{longtable}
\usepackage{multirow}
\usepackage{amsmath}
\usepackage{amssymb}
\usepackage{xspace}
\usepackage{textcase}
\usepackage{subcaption}
\usepackage{dblfloatfix}

\usepackage{float}
\usepackage[latin1]{inputenc}
\usepackage{tikz}
\usetikzlibrary{trees}
\usetikzlibrary{arrows}

\DeclareMathOperator*{\argmin}{arg\,min}
\newtheorem{theorem}{Theorem}[section]
\newtheorem{corollary}{Corollary}[theorem]

\renewcommand{\Re}{\mathbb{R}}

\title{\LARGE \bf
Learning Koopman Representations for Hybrid Systems
}


\author{Craig Bakker, Arnab Bhattacharya, Samrat Chatterjee, Casey J. Perkins, and Matthew R. Oster
\thanks{This work was supported by the National Security Directorate Seed LDRD at PNNL}
\thanks{C. Bakker, A. Bhattacharya, S. Chatterjee, C.J. Perkins, and M.R. Oster are with the Pacific Northwest National Laboratory,
        Richland, Washington
        {\tt\small craig.bakker@pnnl.gov}}%
}

\begin{document}

\maketitle
\thispagestyle{empty}
\pagestyle{empty}

\begin{abstract}
The Koopman operator lifts nonlinear dynamical systems into a functional space of observables, where the dynamics are linear.  In this paper, we provide three different Koopman representations for hybrid systems.  The first is specific to switched systems, and the second and third preserve the original hybrid dynamics while eliminating the discrete state variables; the second approach is straightforward, and we provide conditions under which the transformation associated with the third holds.  Eliminating discrete state variables provides computational benefits when using data-driven methods to learn the Koopman operator and its observables. Following this, we use deep learning to implement each representation on two test cases, discuss the challenges associated with those implementations, and propose areas of future work.
\end{abstract}

\section{INTRODUCTION}

For a given dynamical system $\dot{x} = f\left(x\right)$, $x \in \Re^n$, with flow $F^t\left(x\right): \Re^n \rightarrow \Re^n$, the set of Koopman operators (KOs) $\mathcal{K}^t$ forms a continuous semigroup acting on functions $g\left(x\right)$:

\begin{gather}
\left(\mathcal{K}^t g\right) \left(x\right) = g \left(F^t\left(x\right)\right).
\end{gather}

The KO lifts the (nonlinear) state-space dynamics into a functional space of observables, where the dynamics are linear.  A set of functions $S$ is invariant under the KO if $\mathcal{K}^t g \in S \ \forall \ g \in S$.  Invariance is computationally important because if $S$ is finite-dimensional with a specified basis (of observables), the projection of $\mathcal{K}^t$ onto $S$ becomes a matrix $K^t$ that depends on the chosen basis.  In practice, $K^t$ (or $K$, assuming a fixed time step) is typically calculated on an (approximately) invariant functional subspace using time series data.  One approach specifies a dictionary of basis functions (e.g., polynomials, radial basis functions) and solves for $K$ as a linear regression \cite{williams15jsr,korda18jsr}.  Another approach uses neural networks to learn a basis from the data \cite{yeung17jsr}.

In computational studies, it is often valuable to include the state variables as observables so that observable space trajectories map uniquely to state space trajectories.  If the observables are not state-inclusive, the mapping from observables to state variables may be difficult to calculate or non-unique.  State-inclusive observables can impose additional constraints on the Koopman representation in multi-modal systems, but it is still possible to use these observables in such cases \cite{bakker19cp}.

Various KO formulations for dynamical systems with control terms have been developed.  Brunton et al. consider a small system with an analytical finite Koopman representation and produce a Linear Quadratic Regulator feedback control law \cite{brunton16jsr}.  Small changes in the underlying dynamical system, however, produce a control system that is no longer Koopman invariant.  Korda and Mezi\'{c} \cite{korda18jsr} combine the dictionary-of-functions approach with Model Predictive Control (MPC) and a receding time horizon approach.  As shown in Bakker et al. \cite{bakker19jsr}, though, not all Koopman control formulations in the literature are internally consistent (i.e., they may violate the chain rule of calculus).

Almost all of the systems considered in the Koopman literature have continuous dynamics and controls.  That is to say, the state and control variables are continuous, and even the discrete-time implementations are based on continuous trajectories sampled at discrete (and fixed) time intervals.  Budi{\v{s}}i{\'c} et al. \cite{budisic12jsr} allude briefly to the possibility of Koopman dynamics on discrete variables but do not discuss it in detail.  Hanke et al. \cite{hanke18jsr} discretize the control variable in a dynamical system to learn a Koopman representation for each discretized control state; control turns into switching.  The underlying system, however, is fully continuous.  Govindarajan et al. also consider an otherwise continuous system with impulse dynamics \cite{govindarajan16cp}, but there are no discrete state variables in their hybrid pendulum example.

Identifying finite-dimensional sets of Koopman invariant observables is non-trivial when the observables' domains are continuous.  The increased difficulty of finding such sets with a mixture of continuous- and discrete-valued functions has likely been a barrier to the development and use of hybrid Koopman representations.  However, many real-world systems of interest are not fully continuous.  Applying the KO to hybrid systems could provide benefits comparable to those already demonstrated in continuous systems.  

In this paper, we present three different Koopman representations for hybrid systems.  The first is specific to switched systems, and the second and third preserve the original hybrid dynamics while eliminating the discrete state variables; the second approach is straightforward, and we provide conditions under which the transformation associated with the third holds.  Eliminating discrete state variables provides computational benefits when using data-driven methods to learn the Koopman operator and its observables.  Following this, we leverage deep learning to produce data-driven implementations of the representations on two test problems and computationally demonstrate their performance.  These demonstrations show that each Koopman representation can capture the dynamics of the underlying hybrid system.  Based on the computational demonstrations, we discuss the challenges associated with learning hybrid Koopman representations and suggest avenues for future research in this area.

\section{Koopman Representations of Hybrid Dynamical Systems}
\label{Koopman Representations of Hybrid Dynamical Systems}

A general discrete-time hybrid system with state variables $x$ and $y$ and control variables $u$ and $z$ is

\begin{gather}
x_{k+1} = f\left(x_k,y_k,u_k,z_k\right)  
\label{hybrid 1} \\
y_{k+1} = g \left(x_k,y_k,u_k,z_k\right),
\label{hybrid 2}
\end{gather}

\noindent where $x \in \Re^n$ and $u \in \Re^m$ are continuous, while  $y \in \mathbb{Z}^p$ and  $z \in \mathbb{Z}^q$ are discrete.  We consider the discrete-time case because most data-driven approaches rely on discretely sampled trajectory data.  For the proposed Koopman representations of these systems, we will use variations on the form

\begin{gather}
\psi_x \left(x_{k+1}\right) = K_x \psi_x \left(x_k\right) + K_{xu} \psi_{xu} \left(x_k,u_k\right),
\end{gather}

\noindent where $K_x$ and $K_{xu}$ are the finite KO approximations and $\psi_x\left(x\right)$ and $\psi_{xu} \left(x,u\right)$ are KO observables.  For more on the use of this form, see Bakker et al. \cite{bakker19jsr}.  Uncontrolled systems then become a special case where $K_{xu} = 0$.  In principle, we could extend this to

\begin{gather}
\psi_x \left(x_{k+1},y_{k+1}\right) = K_x \psi_x \left(x_k,y_k\right) \nonumber \\
+ K_{xu} \psi_{xu} \left(x_k,y_k,u_k,z_k\right)
\end{gather}

\noindent for a hybrid system and treat $y$ no differently than $x$. However, such a system may have difficulty in accounting for the discrete nature of $y$ -- especially if $\psi_x$ is state-inclusive.  On the one hand, identifying an (approximately) invariant functional subspace with continuous and discrete domains may be very difficult.  On the other hand, using a continuous relaxation of $y$ to identify its correct discrete value would actually change the dynamics in (\ref{hybrid 1})-(\ref{hybrid 2}).  Additional structure is necessary to ensure that the discrete-valued $y_{k+1}$ can always be recovered uniquely from $\psi_x \left(x_{k+1},y_{k+1}\right)$.  We therefore propose three different KO representations.

\subsection{Switched Systems}
\label{Switched Systems}

Switched systems are a special case of hybrid systems with the switched states' dynamics indexed by $\lambda \in \mathbb{N}$:

\begin{gather}
x_{k+1} = f_{\lambda_k} \left(x_k,u_k\right) \\
\lambda_{k+1} = g \left(x_k,u_k,\lambda_k\right).
\end{gather}

Their structure allows for a simpler KO representation than hybrid systems in general:

\begin{gather}
\psi_x \left(x_{k+1}\right) = K^{\lambda_k}_x \psi_x \left(x_k\right) + K^{\lambda_k}_{xu} \psi_{xu} \left(x_k,u_k\right) \\
\lambda_{k+1} = g \left(x_k,u_k,\lambda_k\right).
\end{gather}

$\lambda$ now indexes a set of $K_x$ and $K_{xu}$ matrices (i.e., distinct $K^{\lambda}_x$ and $K^{\lambda}_{xu}$ matrices for each value of $\lambda$).  Different switched states use the same set of observables instead of having different sets of observables for each state, and there is good reason for this.  The learned KO observables represent a basis for an (approximately) invariant KO subspace $S$; the finite representation of the KO on $S$ (i.e., $K_x$) depends on $S$ and maps $S$ to itself.  A different set of observables would span a different invariant subspace $S'$ and therefore have a different KO representation.  Trying to use the KO to map from one set to the other could therefore not use either finite representation consistently.  There may be ways to circumvent this problem, but for the purpose of this paper, and for the sake of simplicity, we will not attempt to do so here.

\subsection{General Hybrid Systems}
\label{General Hybrid Systems}

Here, we propose using a set of continuous observables $\xi \left(x,y\right)$ and $\zeta\left(u,z\right)$ that map uniquely to $y$ and $z$, respectively.  For a state-inclusive controlled system, we have

\begin{gather}
\psi_x \left(x_{k+1},y_{k+1}\right) = K_x \psi_x \left(x_k,y_k\right) \nonumber \\
+ K_{xu} \psi_{xu} \left(x_k,y_k,u_k,z_k\right) \\
\psi_x \left(x,y\right) = \left[ x \ \xi\left(x,y\right) \ \varphi_x\left(x,y\right) \right] \\
\psi_{xu} \left(x,y,u,z\right) = \left[ u \ \zeta\left(u,z\right) \ \varphi_{xu} \left(x,y,u,z\right) \right],
\end{gather}

\noindent where $\varphi_x\left(x,y\right)$ and $\varphi_{xu} \left(x,y,u,z\right)$ are observable functions (either fixed or learned).  Functions such as

\begin{gather}
\xi \left(x,y\right) = \left(2 y - 1\right) \log \left( 1 + e^{\rho \left(x,y\right)}\right) 
\label{softmax formulation} \\
\xi \left(x,y\right) = \frac{1}{2} \tanh \left( \rho \left(x,y\right) \right) + y
\label{tanh formulation}
\end{gather}

\noindent for binary and integer variables, respectively, provide a surjective mapping from $\xi\left(x,y\right)$ to $y$; $\rho\left(x,y\right)$ is a function that can be fixed or learned just like $\varphi_x \left(x,y\right)$.  (\ref{softmax formulation}) and (\ref{tanh formulation}) are examples of possible options; others could be defined.  Analogous definitions apply to $\zeta\left(u,z\right)$.  For both $\xi \left(x,y\right)$ and $\zeta \left(u,z\right)$, the surjective mapping makes it possible to eliminate the discrete variables without affecting the underlying dynamics.  This in turn facilitates the use of data-driven computational methods to learn the KO and its observables.

\subsection{Transformed Systems}
\label{Transformed Systems}

Here, we introduce a different transformation between (\ref{hybrid 1})-(\ref{hybrid 2}) and an equivalent system with continuous state variables; again, we retain the hybrid dynamics using only continuous variables.  We then provide conditions for mapping the hybrid and continuous sets of trajectories to each other.

\begin{theorem}
\label{transf ctrl}

Consider the system defined in (\ref{hybrid 1})-(\ref{hybrid 2}), and define $\sigma = \left[ y \ z \right]$.  Fixed control inputs $\left\{u_k,z_k\right\}$ and initial conditions $\left(x_0,y_0\right)$ produces a trajectory $\left\{x_k,y_k\right\}$, $k \in \mathbb{N}$.  If

\begin{gather}
f\left(x,y_1,u,z_1\right) \neq f\left(x,y_2,u,z_2\right)
\label{unique transf 2}
\end{gather}

\noindent for all $x$ and $u$, and for all $\sigma_1 \neq \sigma_2$, then there exists a dynamical system with continuous state variables $s \in \Re^{2n+m}$ and corresponding trajectory $\left\{s_k\right\}$ such that there exists a bijection between $s_k$ and $\left(x_k,y_k\right)$, $k \in \mathbb{N}$.

\end{theorem}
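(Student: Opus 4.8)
The plan is to realize the continuous-state system as a one-step delay embedding of the continuous part of the hybrid trajectory. Concretely, I would define
\begin{gather}
s_k = \left[ x_k \ x_{k-1} \ u_{k-1} \right] \in \Re^{2n+m},
\end{gather}
so that the $n + (n+m) = 2n+m$ components are accounted for exactly and the discrete variables never appear explicitly in $s$. The whole argument then reduces to showing that the pair $(x_k,y_k)$ can be reconstructed from $s_k$, and conversely, along any trajectory generated by the fixed inputs $\left\{u_k,z_k\right\}$.

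First I would reinterpret the hypothesis (\ref{unique transf 2}): for every fixed $(x,u)$ the map $\sigma \mapsto f\left(x,y,u,z\right)$ with $\sigma = \left[ y \ z \right]$ is injective, hence invertible on its range. Applying this at step $k-1$, the triple $(x_{k-1},u_{k-1},x_k)$ -- all of which are read off from $s_k$ together with the identity $x_k = f\left(x_{k-1},y_{k-1},u_{k-1},z_{k-1}\right)$ from (\ref{hybrid 1}) -- determines $\sigma_{k-1} = \left[ y_{k-1} \ z_{k-1} \right]$ uniquely. I would package this as a recovery map $h$ with $h\left(x_{k-1},u_{k-1},x_k\right) = \sigma_{k-1}$. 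Substituting the recovered $y_{k-1},z_{k-1}$ into the discrete update (\ref{hybrid 2}) then yields $y_k = g\left(x_{k-1},y_{k-1},u_{k-1},z_{k-1}\right)$, so the forward map
\begin{gather}
\Phi\left(s_k\right) = \left( x_k, \ g\left(x_{k-1},y_{k-1},u_{k-1},z_{k-1}\right) \right)
\end{gather}
is well defined and sends $s_k$ to $(x_k,y_k)$.

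Next I would verify that $\left\{s_k\right\}$ is genuinely the trajectory of a continuous-state system driven by the fixed inputs: since $y_k$ is a function of $s_k$ (and $u_k,z_k$ are given), the first block of $s_{k+1}$ is $f\left(x_k,y_k,u_k,z_k\right)$, while the remaining blocks are the old $x_k$ and $u_k$, giving $s_{k+1} = F\left(s_k,u_k,z_k\right)$ with $s$ real-valued throughout. The bijection then follows by exhibiting the inverse of $\Phi$ along the trajectory: from the hybrid data one simply sets $s_k = \left[ x_k \ x_{k-1} \ u_{k-1} \right]$, and the two constructions are mutually inverse for every $k$.

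The main obstacle I anticipate is the initial step $k=0$, where $s_0$ would require the nonexistent $x_{-1},u_{-1}$, so that both the embedding and the recovery map $h$ are a priori only defined for $k \geq 1$. I would resolve this by prepending a virtual step, i.e.\ choosing any $(x_{-1},y_{-1},u_{-1},z_{-1})$ that is consistent with the given $(x_0,y_0)$ under (\ref{hybrid 1})-(\ref{hybrid 2}), or, failing a suitable preimage, by encoding the single given value $y_0$ directly into $s_0$; either way the bijection extends to all of $\mathbb{N}$. A secondary point worth checking is that injectivity of $f$ in $\sigma$ makes $h$ single-valued but not necessarily continuous -- this is acceptable here, since the theorem asks only for a continuous-valued state $s$ rather than continuous dynamics, and I would state that distinction explicitly.
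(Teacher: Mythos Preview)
Your construction is essentially the paper's, up to a one-step shift in the delay embedding. The paper defines the \emph{forward} increment $v_k = x_{k+1} - x_k = f(x_k,y_k,u_k,z_k) - x_k$ and takes $s_k = (x_k,u_k,v_k)$, then recovers $y_k$ directly via a bijection $G_y(v_k,x_k,u_k)$ built from exactly the injectivity hypothesis you invoke; you instead look \emph{backward} with $s_k = (x_k,x_{k-1},u_{k-1})$, recover $\sigma_{k-1}$, and then push through $g$ to obtain $y_k$. The two states carry the same information (your $s_{k+1}$ is an affine reparametrization of the paper's $s_k$), but the forward choice buys two things. First, $s_0 = (x_0,u_0,f(x_0,y_0,u_0,z_0)-x_0)$ is immediately well defined from the given initial data, so the $k=0$ obstacle you flag simply does not arise; your proposed workarounds there---a virtual preimage, which need not exist since $f$ is not assumed surjective, or an ad hoc encoding of $y_0$ into $s_0$---are the weakest part of the argument. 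Second, the paper's inverse $(x_k,y_k)\mapsto s_k$ is a pointwise map given the current inputs $(u_k,z_k)$, whereas your inverse needs $x_{k-1}$ and hence implicitly appeals to the whole trajectory; on the fixed trajectory this still yields a bijection in the sense the theorem asks for, but the pointwise version is stronger. Shifting your lag forward by one index removes both issues and reproduces the paper's proof exactly. Your closing remark that only the state values, not the transition map, need be continuous is correct and applies equally to the paper's $G$.
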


\begin{proof}

Let $S \subseteq \mathbb{Z}^{p+q}$ be the set of possible $\sigma$ values, and define $v$, $\phi$, and $\omega$ such that

\begin{gather}
v_k = x_{k+1} - x_k 
\label{x ctrl eqn} \\
v_k = f\left(x_k,y_k,u_k,z_k\right) - x_k \\
u_{k+1} = u_k + \phi_k 
\label{u ctrl eqn} \\
z_{k+1} = z_k + \omega_k \\
v_{k+1} = f\left(x_{k+1},y_{k+1},u_{k+1},z_{k+1}\right) - x_{k+1} \nonumber \\
= f\left(x_k + v_k,g \left(x_k,y_k,u_k,z_k\right),u_k+\phi_k,z_k+\omega_k\right) \nonumber \\
- \left(x_k + v_k\right).
\end{gather}

If (\ref{unique transf 2}) holds, then there exists a family of bijections $G$: 

\begin{gather}
\tilde{f}\left(x,u,\sigma_1\right) \neq \tilde{f}\left(x,u,\sigma_2\right) \ \forall \ x,u; \ \forall \ \sigma_1 \neq \sigma_2 \\
V\left(x,u\right) = \left\{ v: v = \tilde{f}\left(x,u,\sigma\right) - x \ \forall \ \sigma \in S \right\} \\
G\left(\cdot,x,u\right) : V\left(x,u\right) \rightarrow S \\
G\left(v,x,u\right) = \sigma : v = \tilde{f}\left(x,u,\sigma\right) - x \\
G\left(v_k,x_k,u_k\right) = \sigma_k \\
G_y\left(v_k,x_k,u_k\right) = y_k \\
G_z\left(v_k,x_k,u_k\right) = z_k \\
y_{k+1} = g \left(x_k,G_y\left(v_k,x_k,u_k\right),u_k,G_z\left(v_k,x_k,u_k\right)\right) \nonumber \\
= h\left(x_k,u_k,v_k\right) \\
v_{k+1} = f\left(x_k + v_k, h\left(x_k,u_k,v_k\right),u_k + \phi_k, \right. \nonumber \\
\left. G_z\left(v_k,x_k,u_k\right) + \omega_k \right) - \left(x_k + v_k\right).
\label{v ctrl eqn}
\end{gather}

The dynamics then consist of (\ref{x ctrl eqn}), (\ref{u ctrl eqn}), and (\ref{v ctrl eqn}) with initial conditions defined by $x_0$, $y_0$, $u_0$, $u_1$, $z_0$, and $z_1$:

\begin{gather}
v_0 = f\left(x_0,y_0,u_0,z_0\right) - x_0 \\
\phi_0 = u_1 - u_0 \\
\omega_0 = z_1 - z_0.
\end{gather}

Given $u_k$ and $z_k$, we have a unique and bijective mapping between $\left(x_k,y_k\right)$ and $s_k = \left(x_k,u_k,v_k\right)$.  The new dynamical system has continuous state variables $x$, $v$, and $u$ as well as control variables $\phi$ (continuous) and $\omega$ (discrete).

\end{proof}

The proof of Theorem \ref{transf ctrl} uses the definition $v_k = x_{k+1} - x_k$.  In this proof, $G\left(\cdot,x,u\right)$ maps from $V\left(x,u\right) \subset \Re^n$, not $\Re^n$, to $S$.  However, it is possible to extend this.  For example, using a mapping $\mathcal{V} \left(\cdot,x,u\right): \Re^n \rightarrow V\left(x,u\right)$ such as

\begin{gather}
\mathcal{V} \left(v,x,u\right) = \argmin \limits_{v' \in V\left(x,u\right)} \left\| v' - v\right\|
\end{gather}

\noindent produces the surjective mapping $G\left(\mathcal{V} \left(\cdot,x,u\right),x,u\right): \Re^n \rightarrow S$; the mapping from $s_k$ to $\left(x_k,y_k\right)$ then becomes surjective.  This extension is computationally useful because it makes the original hybrid trajectory recoverable from the transformed trajectory even in the presence of noise or other errors.

\begin{corollary}
\label{transf unctrl}

Consider the uncontrolled hybrid system

\begin{gather}
x_{k+1} =  f \left(x_k,y_k\right)
\label{x transf unctrl} \\
y_{k+1} = g \left(x_k,y_k\right),
\label{y transf unctrl}
\end{gather}

\noindent where $x \in \Re^n$ and $y \in \mathbb{Z}^p$. Fixed initial conditions $\left(x_0,y_0\right)$ produce a trajectory $\left\{x_k,y_k\right\}$, $k \in \mathbb{N}$.  If

\begin{gather}
f\left(x,y_1\right) \neq f\left(x,y_2\right)
\label{unique transf 1}
\end{gather}

\noindent for all $x$, and for all $y_1 \neq y_2$, then there exists a dynamical system with continuous state variables $s \in \Re^{2n}$ and corresponding trajectory $\left\{s_k\right\}$ such that there exists a bijection between $s_k$ and $\left(x_k,y_k\right)$, $k\in \mathbb{N}$.

\end{corollary}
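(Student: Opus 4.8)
The plan is to treat the uncontrolled system (\ref{x transf unctrl})--(\ref{y transf unctrl}) as the special case of Theorem \ref{transf ctrl} in which the control variables are eliminated, and then to verify that the construction in that proof specializes correctly. First I would set the continuous control $u$ and the discrete control $z$ to be absent (equivalently $m=q=0$), so that $\sigma = y$ and the non-uniqueness hypothesis (\ref{unique transf 2}) collapses \emph{exactly} to the stated hypothesis (\ref{unique transf 1}): $f\left(x,y_1\right) \neq f\left(x,y_2\right)$ for all $x$ and all $y_1 \neq y_2$. With no control present, the auxiliary increments $\phi$ and $\omega$ disappear, and the augmented state contracts from $\left(x,u,v\right) \in \Re^{2n+m}$ down to $\left(x,v\right) \in \Re^{2n}$, matching the claimed dimension.

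Next, mirroring the construction in the theorem's proof, I would introduce the new continuous state variable $v_k = x_{k+1} - x_k = f\left(x_k,y_k\right) - x_k$. Hypothesis (\ref{unique transf 1}) says precisely that for each fixed $x$ the map $y \mapsto f\left(x,y\right) - x$ is injective on the set $S \subseteq \mathbb{Z}^p$ of admissible $y$ values, so its image $V\left(x\right) = \left\{ v : v = f\left(x,y\right) - x, \ y \in S \right\}$ carries a well-defined inverse $G\left(\cdot,x\right): V\left(x\right) \rightarrow S$ with $G\left(v_k,x_k\right) = y_k$. Using $y_k = G\left(v_k,x_k\right)$ together with $x_{k+1} = x_k + v_k$, I would then close the dynamics entirely in the continuous variables:
\begin{gather}
x_{k+1} = x_k + v_k \nonumber \\
v_{k+1} = f\left(x_k + v_k, \ g\left(x_k, G\left(v_k,x_k\right)\right)\right) - \left(x_k + v_k\right), \nonumber
\end{gather}
with initial condition $v_0 = f\left(x_0,y_0\right) - x_0$ fixed by $\left(x_0,y_0\right)$.

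Finally I would exhibit the bijection between $s_k = \left(x_k,v_k\right)$ and $\left(x_k,y_k\right)$: the forward map sends $\left(x_k,y_k\right) \mapsto \left(x_k, f\left(x_k,y_k\right)-x_k\right)$ and its inverse sends $\left(x_k,v_k\right) \mapsto \left(x_k, G\left(v_k,x_k\right)\right)$, and these are mutual inverses precisely because $G\left(\cdot,x\right)$ inverts $y \mapsto f\left(x,y\right)-x$. Because the whole argument is just the $u,z$-free restriction of Theorem \ref{transf ctrl}, I do not expect any genuinely new obstacle; the one point deserving explicit care is confirming that injectivity of $f\left(x,\cdot\right)$ alone --- rather than the joint $\sigma$-injectivity used in the theorem --- is exactly what is needed to recover $y_k$ from $\left(x_k,v_k\right)$, and this follows immediately from (\ref{unique transf 1}).
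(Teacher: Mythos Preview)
Your proposal is correct and follows exactly the paper's approach: the paper's proof is the single line ``The result follows from Theorem \ref{transf ctrl} if $u_k$ and $z_k$ are set identically to zero,'' and you have simply unpacked that specialization in detail. The extra verification you provide (that $\sigma$-injectivity collapses to (\ref{unique transf 1}), that the augmented state becomes $(x,v)\in\Re^{2n}$, and that $G(\cdot,x)$ furnishes the inverse) is all correct and consistent with the construction in the theorem's proof.
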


\begin{proof}
The result follows from Theorem \ref{transf ctrl} if $u_k$ and $z_k$ are set identically to zero.
\end{proof}

To generate the necessary time series data, $v_k$ could be calculated after the simulations were run, and learning the KO would implicitly learn the mapping from $\left(x_k,y_k\right)$ to $v_k$.  If (\ref{unique transf 1}) does not hold, though, then $y$ is not identifiable from $v$.  Defining additional variables: $w_{k+1} = x_k$ and $u_{k+1} = v_k$ could circumvent this problem.  For an uncontrolled system, the necessary condition would be

\begin{gather}
f\left(f\left(x,y_1\right),g\left(x,y_1\right)\right) \neq f\left(f\left(x,y_2\right),g\left(x,y_2\right)\right) \nonumber \\
\text{or } f\left(x,y_1\right) \neq f\left(x,y_2\right)  \ \forall \ x,y_1 \neq y_2.
\label{multi-step cond}
\end{gather}

(\ref{unique transf 1}) defines a single-step identifiability criterion. (\ref{multi-step cond}) defines a two-step version of that criterion: if the trajectories corresponding to different $y$ values differ after two time steps, then $y$ is identifiable by considering two successive data points from the same trajectory.  In principle, eventually an $n$-step criterion (with associated additional variables) could be found, and there would be an analogous bijective mapping to the discrete variables.  Additionally, the transformation described in the proof of Theorem \ref{transf ctrl} simplifies the functional forms of $K_x$ and $K_{xu}$, since the $x$ dynamics are known to be linear in $x$ and $v$ and the $u$ dynamics are linear in $u$ and $\phi$.  The resulting transformed KO representation is

\begin{gather}
\psi_x \left(x_{k+1},u_{k+1},v_{k+1}\right) = K_x \psi_x \left(x_k,u_k,v_k\right) \nonumber \\
+ K_{xu} \psi_{xu} \left(x_k,u_k,v_k,\phi_k,\omega_k\right) \\
\psi_x \left(x,u,v\right)= \left[x \ u \ v \ \varphi_x\left(x,u,v\right)\right] \\
\psi_{xu} \left(x,u,v,\phi,\omega\right) = \left[ \phi_k \ \omega_k \ \varphi_{xu}\left(x,u,v,\phi,\omega\right) \right].
\end{gather}

\begin{gather}
K_x = \left[ \begin{array}{cccc} I_n &\mathbf{0}_{n \times m} &I_n &\mathbf{0}_{n \times n_{\varphi_x}} \\ \mathbf{0}_{m \times n} &I_m &\mathbf{0}_{m \times n} &\mathbf{0}_{m \times n_{\varphi_x}} \\ \multicolumn{4}{c}{\tilde{K}_x}   \end{array} \right] \\
K_{xu} = \left[ \begin{array}{ccc} \mathbf{0}_{n \times m} &\mathbf{0}_{n \times q} &\mathbf{0}_{n \times n_{\varphi_{xu}}} \\ I_m &\mathbf{0}_{m\times q} &\mathbf{0}_{m \times n_{\varphi_{xu}}} \\ \multicolumn{3}{c}{\tilde{K}_{xu}} \end{array} \right].
\end{gather}

The uncontrolled case has analogous structure.  Using this transformation transfers the discontinuities from the state space into the dynamics, as does the general hybrid formulation described previously.  In both cases, the motivation behind the transfer is computational.  For learning the KO and its observables from data, and for using the KO to simulate trajectories, implicit hybrid or switching behaviour poses fewer difficulties than handling discrete state variables directly.  This is particularly true for when using neural networks to learn the KO observables.  Neural networks that produce discrete outputs cannot be trained using gradient-based learning algorithms because the outputs are effectively piecewise constant (i.e., their gradients are zero).

\section{Computational Demonstrations}
\label{Computational Demonstrations}

\subsection{Implementation}
\label{Implementation}

For testing, we used two computational examples: a simple numerical example and a vehicle automatic transmission.  The neural networks used to learn the Koopman observables were implemented in Tensorflow \cite{tensorflow}.  These networks used Exponential Linear Unit (ELU) hidden layer activations, were trained to minimize relative Mean Squared Error (MSE)

\begin{gather}
\sum_k \frac{\left\| \psi \left(x_{k+1}\right) - K \psi \left(x_k\right) \right\|^2}{\left\| x_{k+1} - x_k \right\|^2},
\label{MSE error}
\end{gather}

\noindent and were trained using Adam \cite{kingma2014adam} with an initial learning rate of 0.001.  For the numerical test case, we used networks with four layers, each eight neurons wide (8x4) to represent $\varphi_x$, $\varphi_{xu}$ and $\rho$.  The automatic transmission model, being more complicated, used a 12x4 network for each of $\varphi_x$, $\varphi_{xu}$ and $\rho$; $\varphi_x$ and $\varphi_{xu}$ each had five elements for the numerical test case and ten elements for the automatic transmission.  The training data was generated from grids of initial points on the state and control variable spaces run for a single time step: a total of 1000 points for the numerical test case and 625,000 points for the automatic transmission model.  The modified control variables in the transformed formulation changed the dimension of the space being sampled, so different sampling grids were used.  This grid-based approach sampled the space uniformly without requiring ergodicity.  We defined the grids such that the system dynamics would lie within the grid interior, which avoided boundary effects or a need for extrapolation.  In the numerical example, mini-batch training sufficed.  However, for the automatic transmission, full batch training was needed for good training convergence.

To generate trajectories from learned KO representations, we tested two different implementations.  The first implementation (designated as `multiplied') initialized $\psi_x$ and propagated the dynamics by multiplying by $K_x$ and adding $K_{xu} \psi_{xu}$ (if necessary).  State variable values were extracted in post-processing, but other than the initialization and calculation of $\psi_{xu}$, $x_k$ was not explicitly needed to calculate the trajectory.  Control terms aside, this produced a purely linear representation of the dynamics.  The second implementation (designed as `evaluated') initialized and multiplied $\psi_x$ as well, but at each iteration, it extracted state variable values and re-initialized $\psi_x$.  This implementation meant that the dynamics were not only a sequence of matrix multiplications, but it was intended to reduce error propagation across iterations and align with a rolling time horizon approach (such as MPC).

With the general hybrid formulation, we obtained better results when we learned a relaxed representation ($\xi_i = y_i$) to a lower level of fidelity and used that as a warm start to learn the representation using (\ref{tanh formulation}) as opposed to learning $\rho$ and $\varphi_x$ simultaneously.  For the automatic transmission, there was also the question of whether to learn dynamics and control sequentially or whether to try and learn both simultaneously.  Sequential learning was sometimes beneficial in speeding up convergence, but generally, learning both simultaneously provided the greatest accuracy (with the warm start approximation as described for the general hybrid formulation).

\subsection{Numerical Test Case}
\label{Numerical Test Case}

As an initial test case, we defined the hybrid system

\begin{gather}
x_{k+1} = x_k + \frac{2 y_k - 1}{1+0.1x_k^2} \\
y_{k+1} = \left\{\begin{array}{cc} 1 &x_k < -1 \\ 0 &x_k > 1 \\ y_k & \text{else} \end{array} \right. .
\end{gather}

\begin{figure}[htp]
\centering
\begin{subfigure}[t]{0.23\textwidth}
\centering
\includegraphics[width=\textwidth]{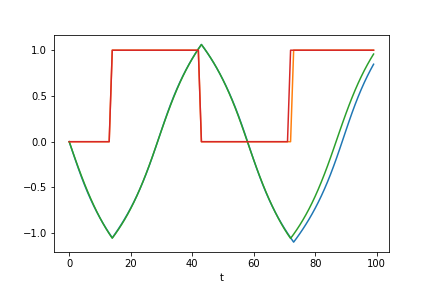}
\caption{Switched, Eval, 0.001}
\end{subfigure}
\begin{subfigure}[t]{0.23\textwidth}
\centering
\includegraphics[width=\textwidth]{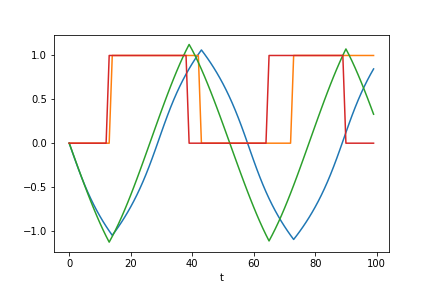}
\caption{Switched, Mult, 0.001}
\end{subfigure}
\begin{subfigure}[t]{0.23\textwidth}
\centering
\includegraphics[width=\textwidth]{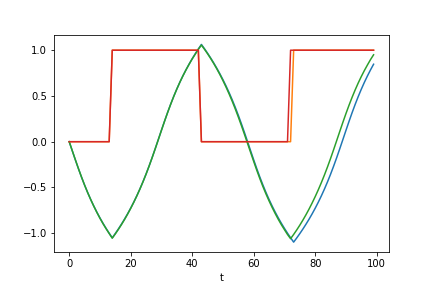}
\caption{Hybrid, Eval, 0.002}
\end{subfigure}
\begin{subfigure}[t]{0.23\textwidth}
\centering
\includegraphics[width=\textwidth]{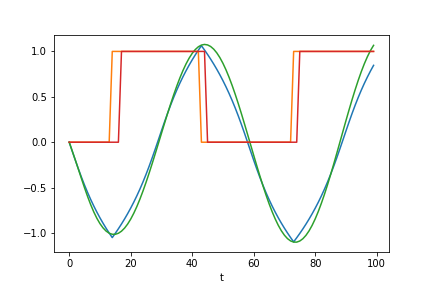} 
\caption{Hybrid, Mult, 0.002}
\end{subfigure}
\begin{subfigure}[t]{0.23\textwidth}
\centering
\includegraphics[width=\textwidth]{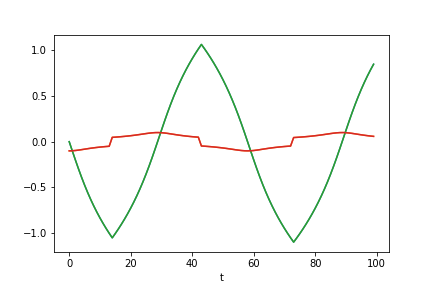}
\caption{Transformed, Eval, 0.005}
\end{subfigure}
\begin{subfigure}[t]{0.23\textwidth}
\centering
\includegraphics[width=\textwidth]{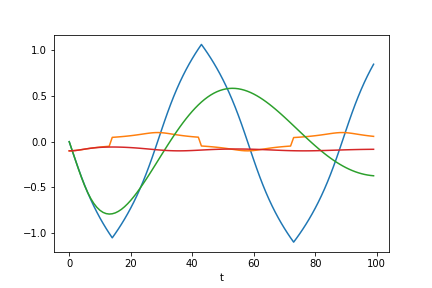}
\caption{Transformed, Mult, 0.005}
\end{subfigure}
\caption{Simulation Results (Formulation, Method, MSE; True $x$ and $y$ or $v$ Trajectories Shown in Blue and Orange, Respectively; Koopman-Generated $x$ and $y$ or $v$ Trajectories Shown in Green and Red, Respectively)}
\label{fig:test case results}
\end{figure}

This use case satisfies the uniqueness criterion in (\ref{unique transf 1}).  Fig. \ref{fig:test case results} shows sample trajectories produced by different learned Koopman representations; each caption's MSE value, defined by (\ref{MSE error}), indicates the training error achieved by each method.  These trajectory plots assess  multi-step testing error; note the piecewise continuous nature of the true $v$ trajectory.  The switched representation was the easiest to train of the three methods, and it was also able to get the lowest training error; the transformed formulation was able to train to an error of 0.005, and the general hybrid formulation using (\ref{tanh formulation}) was able to train to 0.002.  In all three cases, the evaluated method produced more accurate Koopman trajectories -- trajectories that remained accurate over the 100 time steps presented in Fig. \ref{fig:test case results}.  The multiplied method still produced good results in the switched and general hybrid formulations, and the worse results for the transformed formulation may be a reflection of its higher training error.

\subsection{Vehicle Automatic Transmission}
\label{Vehicle Automatic Transmission}

\begin{figure*}[htp]
\centering
\begin{subfigure}[t]{0.3\textwidth}
\centering
\includegraphics[width=\textwidth]{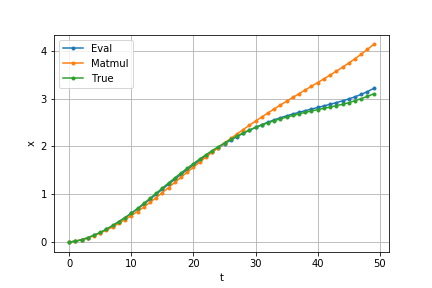} 
\caption{Position, Switched, 0.005}
\end{subfigure}
\begin{subfigure}[t]{0.3\textwidth}
\centering
\includegraphics[width=\textwidth]{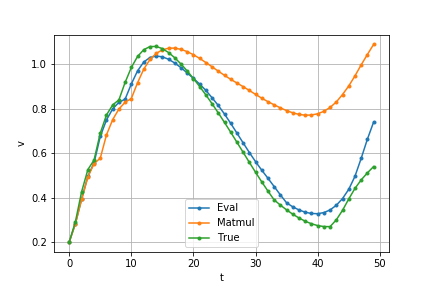}
\caption{Velocity, Switched, 0.005}
\end{subfigure}
\begin{subfigure}[t]{0.3\textwidth}
\centering
\includegraphics[width=\textwidth]{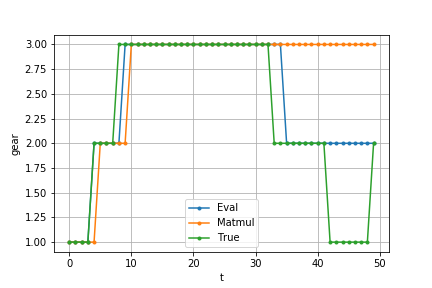}
\caption{Gear, Switched, 0.005}
\end{subfigure}
\begin{subfigure}[t]{0.3\textwidth}
\centering
\includegraphics[width=\textwidth]{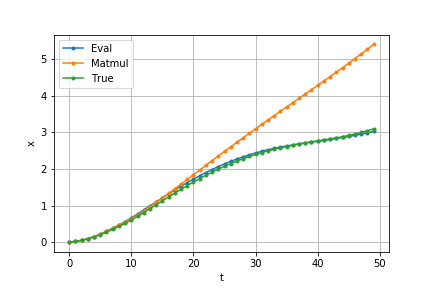} 
\caption{Position, Hybrid, 0.01}
\end{subfigure}
\begin{subfigure}[t]{0.3\textwidth}
\centering
\includegraphics[width=\textwidth]{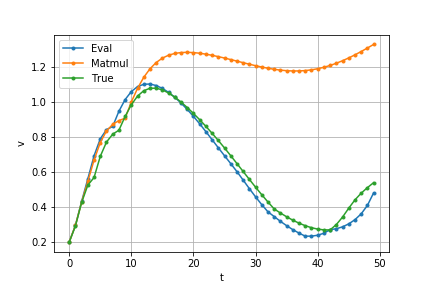}
\caption{Velocity, Hybrid, 0.01}
\end{subfigure}
\begin{subfigure}[t]{0.3\textwidth}
\centering
\includegraphics[width=\textwidth]{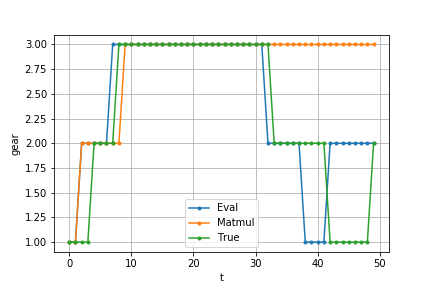}
\caption{Gear, Hybrid, 0.01}
\end{subfigure}
\begin{subfigure}[t]{0.3\textwidth}
\centering
\includegraphics[width=\textwidth]{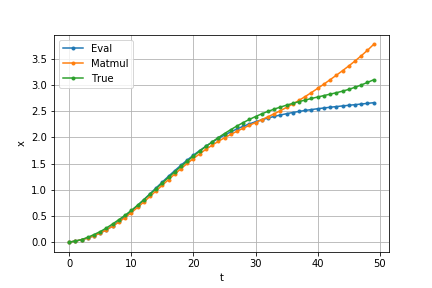} 
\caption{Position, Transformed, 0.008}
\end{subfigure}
\begin{subfigure}[t]{0.3\textwidth}
\centering
\includegraphics[width=\textwidth]{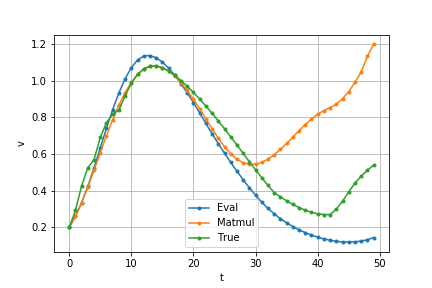}
\caption{Velocity, Transformed, 0.008}
\end{subfigure}
\begin{subfigure}[t]{0.3\textwidth}
\centering
\includegraphics[width=\textwidth]{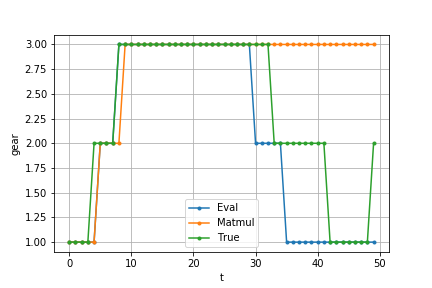}
\caption{Gear, Transformed, 0.008}
\end{subfigure}
\caption{Automatic Transmission Simulation Results (Variable, Method, MSE)}
\label{fig:transmission results}
\end{figure*}

We used an automatic transmission model similar to those of Antsaklis and Koutsoukos \cite{antsaklis02jsr} and of Lygeros et al. \cite{lygeros08tr}: 

\begin{gather}
x_{k+1} = x_k + v_k \Delta t \\
v_{k+1} = v_k + \left[-\alpha v_k^2 \text{sign} \left(v_k\right) - g \sin \left(\theta \left(x_k\right)\right) \right. \\
\left. - \gamma v_k G \left(q_k\right) + T_k \eta \left(v_k G \left(q_k\right) \right) \right] \Delta t \\
q_{k+1} = \left\{ \begin{array}{cc} 
q_k + 1 & v_k G \left(q_k\right) > \omega_{high}, q_k < 5 \\
q_k - 1 & v_k G \left(q_k\right) < \omega_{low}, q_k < 1 \\
q_k		  & \text{else} \end{array} \right. .
\end{gather}

Here, $-\alpha v_k^2 \text{sign} \left(v_k\right)$ is deceleration due to air resistance, $-g \sin \left(\theta \left(x_k\right)\right)$ is acceleration due to gravity on a sloping road, $- \gamma v_k G \left(q_k\right)$ is deceleration due to engine friction, and $T \eta \left(v_k G \left(q_k\right) \right)$ is the acceleration due to torque provided by the engine; $x_k$ is position, $v_k$ is velocity, $q_k$ is the gear (1 to 5), and $G\left(q\right)$ is the gear ratio for each gear of the transmission.  Fig. \ref{fig:transmission results} shows trajectory results for the different formulations with a fixed $\left\{T_k\right\}$ series and defined functions $\theta \left(x\right)$ and $G\left(q\right)$; again, the plots provide an assessment of multi-step testing error, and the listed MSE values indicate training error as defined by (\ref{MSE error}).  The switched formulation trained to MSE = 0.005 while the general hybrid formulation trained to 0.01 and the transformed formulation trained to 0.008.  Again, the evaluated trajectories (blue) followed the true trajectory (green) more closely than the multiplied trajectories (orange).  The evaluated position and velocity trajectories remained at least qualitatively accurate through for 40-50 time steps, whereas the multiplied trajectories were only good for 20-30 time steps.  The discrepancies seemed worst when the Koopman trajectories produced $q_k$ errors.

\section{Discussion}

Overall, the general hybrid formulation was the hardest to train.  The switched formulation was the easiest to train and the most accurate, and the transformed formulation trained quickly but tended to plateau short of the desired accuracy.  Larger neural networks might provide greater accuracy but would also require more effort to train.  The general hybrid formulation may be the most scalable.  As the number of discrete variables increases, the number of distinct states (and thus the number of different KO matrices to learn) for the switched formulation may grow exponentially.  For the transformed formulation, recovering the discrete variable values and guaranteeing that (\ref{unique transf 2}) or (\ref{unique transf 1}) holds may become more difficult as $p$ increases; this formulation's additional variables also increase the sampling burden.

The neural networks used to learn the Koopman observables were small but required a relatively large amount of data to provide a sufficient level of accuracy.  Mini-batch training can be a way to handle those large datasets, but training stability can suffer as a result.  Strategic approaches may be useful here.  For example, we found it advantageous to do initial training with smaller sets of data and then to use the larger, more time-consuming datasets after that pre-training had completed; this was like mesh refinement in finite element methods.  Similarly, for the general hybrid formulation, doing pre-training with a simple continuous relaxation of the discrete variables $y$ made it easier to get final convergence.  These observations point towards larger questions of scalability and computational complexity for the methods presented.  The current paper does not seek to address those questions in depth, but future work should consider those issues in the interest of making the methods more widely applicable.

Error propagation is a key factor in producing useful Koopman representations.  Once training converged to the desired mean error level, the single-step error distributions were generally long-tailed.  What may be even more important, though, are errors in the discrete transitions: significant trajectory deviations usually began with an inaccurate discrete variable transition from one state to another (or lack of such transition).  Accurately predicting these transitions is of primary importance for long-term accuracy.  One solution would be to change the training function.  Weighting the training data so that $\left(x_k,x_{k+1}\right)$ pairs representing discrete state transitions are given more importance would be one way to do this.  Multi-step training would be another approach: it would replace (\ref{MSE error}) with something like

\begin{gather}
\min \sum_k \frac{\left\| \psi_x \left(x_{k+2}\right) - K^2_x \psi_x \left(x_k\right)\right\|^2}{\left\| x_{k+2} - x_k \right\|^2}.
\end{gather}

This would help to enforce accuracy across discrete transitions (e.g., at step $k+1$).  MSE may also not be the ideal metric for these problems.  In the general hybrid formulation, for example, small MSE can lead to large trajectory errors because $y$ is off by 1 even though the error on $\xi$ is small.

\section{Conclusions}

We have produced and described three Koopman representations for hybrid systems.  Two representations rely on surjective mappings to eliminate discrete variables.  Initial demonstrations have provided some insight into how these methods could be scaled up to larger systems and applied in other contexts.  One particular area for future work will be in modifying the training process to account for the specific challenges of hybrid systems; customized error functions (other than MSE) and multi-step training would fall under this heading.  Another area will be comparing different discrete-to-continuous mapping functions for the general hybrid formulation in terms of trainability and accuracy.  Finally, it may be possible to produce additional hybrid Koopman formulations and compare them with the ones presented here.

\section{ACKNOWLEDGMENTS}

The research described in this paper was funded by the National Security Directorate Seed Laboratory Directed Research and Development Investment at the Pacific Northwest National Laboratory, a multiprogram national laboratory operated by Battelle for the U.S. Department of Energy.


\bibliography{bib}
\bibliographystyle{ieeetr}

\end{document}